\newtheorem{thm}{Theorem}[section]
\newtheorem{cor}[thm]{Corollary}
\newtheorem{lem}[thm]{Lemma}
\begin{document}
%\linenumbers
\title[On a generalization of the support problem of Erd\"{o}s ]{On a generalization of the support problem of Erd\"{o}s and its analogues for abelian varieties and $K$-theory}

\author{Stefan Bara\'{n}czuk} 
\address{Faculty of Mathematics and Computer Science, Adam Mickiewicz University\\ul. Umultowska 87, Pozna\'{n}, Poland}
\email{stefbar@amu.edu.pl}

\begin{abstract}In this paper we consider certain local-global principles for Mordell-Weil type groups over number fields like $S$-units, abelian varieties and  algebraic $K$-theory groups.
\end{abstract}

\keywords{ reduction maps; abelian varieties; $K$-theory groups}

\subjclass[2000]{11R04; 11R70; 14K15 }

\maketitle

%\textsc{Department of Mathematics, Adam Mickiewicz University, Pozna\'{n}, Poland}
%\email{stefbar@amu.edu.pl}
\section{Introduction.}

If $m$ is a positive integer then its support, denoted by $\mathrm{supp}(m)$, is the set of prime numbers dividing $m$ . 
In this paper we will prove 
\begin{thm}\label{uogolnienie erdosa}
Let $x_{1}, \ldots , x_{t}$ (resp. $y_{1}, \ldots , y_{t}$) be multiplicatively independent natural numbers  such that
\begin{displaymath}\bigcup_{i=1} ^{i=t} \mathrm{supp}(x_{i}^{n}-1)=\bigcup_{j=1} ^{j=t} \mathrm{supp}(y_{j}^{n}-1)
\end{displaymath}
for every natural number $n$. Then $\left\{x_{1}, \ldots , x_{t}\right\} = \left\{y_{1}, \ldots , y_{t}\right\}$.
\end{thm}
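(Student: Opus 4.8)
The plan is to reduce the statement to the classical one-variable support theorem of Corrales-Rodrig\'anez and Schoof (the case $t=1$, which asserts that two natural numbers having the same multiplicative order modulo almost all primes must coincide) and to bridge the gap using Kummer theory together with the Chebotarev density theorem. First I would rewrite everything in terms of multiplicative orders. For a prime $p$ dividing none of the $x_i$ or $y_j$ (we discard the finitely many exceptional primes) one has $p \in \mathrm{supp}(x_i^n - 1)$ if and only if $\mathrm{ord}_p(x_i) \mid n$. Hence, fixing such a $p$ and letting $n$ vary, the hypothesis says exactly that the two ``up-sets'' $\{n : \exists i,\ \mathrm{ord}_p(x_i)\mid n\}$ and $\{n : \exists j,\ \mathrm{ord}_p(y_j)\mid n\}$ coincide; equivalently, the multisets $\{\mathrm{ord}_p(x_i)\}_i$ and $\{\mathrm{ord}_p(y_j)\}_j$ have the same set of divisibility-minimal elements. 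I would stress at the outset that this is strictly weaker than equality of the two multisets of orders (for instance $\{2,4\}$ and $\{2,2\}$ generate the same up-set), so a naive term-by-term comparison is unavailable and the multiplicative independence hypothesis must be used to separate the generators.

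The heart of the argument is an \emph{isolation} step. Fix an index $i_0$; I would like to produce infinitely many primes $p$ at which $x_{i_0}$ is the only one of the $x_i$ whose order divides a suitable $n$. To do this I choose an auxiliary prime $\ell$ and seek primes $p \equiv 1 \pmod{\ell}$ for which $\ell \nmid \mathrm{ord}_p(x_{i_0})$ while $\ell \mid \mathrm{ord}_p(x_i)$ for every $i \ne i_0$; in residue-symbol terms, $x_{i_0}$ should be an $\ell$-th power modulo $p$ and each $x_i$ with $i\ne i_0$ should not be. Because $x_1,\dots,x_t$ are multiplicatively independent they remain independent in $\mathbb{Q}(\zeta_\ell)^{\times}/(\mathbb{Q}(\zeta_\ell)^{\times})^{\ell}$, so these power-residue conditions are independent and are realized by a positive-density set of primes via the Chebotarev density theorem applied to the compositum of the relevant Kummer extensions. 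On such a $p$, taking $n = \mathrm{ord}_p(x_{i_0})$ (which is coprime to $\ell$) forces $x_{i_0}^n \equiv 1$ but $x_i^n \not\equiv 1$ for $i \ne i_0$, so $p$ lies in $\bigcup_i \mathrm{supp}(x_i^n-1)$ solely because of $x_{i_0}$. The hypothesis then yields an index $j$ with $\mathrm{ord}_p(y_j) \mid \mathrm{ord}_p(x_{i_0})$.

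From here I would upgrade this divisibility to an equality of orders for a consistent index $j$. Refining the Chebotarev input to prescribe the exact $\ell$-adic valuation of each order, and running the symmetric construction with the roles of the $x_i$ and $y_j$ interchanged, I expect to pin down a single $j=\sigma(i_0)$ for which $\mathrm{ord}_p(x_{i_0}) = \mathrm{ord}_p(y_{\sigma(i_0)})$ holds on an infinite (indeed positive-density) family of primes, and then, by letting $\ell$ range over all auxiliary primes, to promote this to equality of orders at almost all $p$. The one-variable support theorem applied to the pair $(x_{i_0}, y_{\sigma(i_0)})$ then gives $x_{i_0} = y_{\sigma(i_0)}$. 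Since $\sigma$ is injective (the $x_i$, and likewise the $y_j$, are pairwise distinct by multiplicative independence) and both families have $t$ elements, $\sigma$ is a bijection and $\{x_1,\dots,x_t\} = \{y_1,\dots,y_t\}$.

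The step I expect to be the main obstacle is precisely this passage from the union-of-supports condition to a clean one-variable comparison: the hypothesis controls only the divisibility-minimal orders, so I must use multiplicative independence in an essential way to isolate each generator, and the delicate point is the \emph{uniformity and consistency} of the isolation --- guaranteeing, via simultaneous control of the power-residue symbols of the $x_i$ and the $y_j$ across infinitely many $p$, that the matching index $\sigma(i_0)$ can be chosen once and for all rather than varying with $p$ or with the auxiliary prime $\ell$. Verifying the requisite independence of the Kummer extensions, so that Chebotarev really does realize every prescribed pattern of power-residue conditions, is the technical core on which the whole reduction rests.
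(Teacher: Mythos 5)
Your opening moves match the paper's in spirit: the translation of the support condition into divisibility of orders, and the use of Kummer theory plus Chebotarev to prescribe, for an auxiliary prime $\ell$, which members of a multiplicatively independent family are $\ell$-th power residues modulo $p$. The paper packages exactly this as Lemma \ref{o posylaniu} (quoted from \cite{Bar1}), which prescribes the \emph{exact} $\ell$-adic valuation $\ell^{k_i}\parallel \mathrm{ord}_v P_i$; you will need that sharper form, not just divisibility by $\ell$. Your observation that the hypothesis only controls the divisibility-minimal orders, not the multiset of orders, is also the right thing to worry about.

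The gap is in the closing move. The one-variable Corrales-Rodrig\'{a}\~{n}ez--Schoof theorem requires its hypothesis at \emph{almost all} primes, but your isolation argument only produces, on special positive-density Chebotarev families, an index $j$ (a priori depending on $p$ and on $\ell$) with $\mathrm{ord}_p(y_j)\mid\mathrm{ord}_p(x_{i_0})$. The proposed promotion to ``$\mathrm{ord}_p(x_{i_0})=\mathrm{ord}_p(y_{\sigma(i_0)})$ at almost all $p$'' is not available before the theorem is proved: at a generic prime the union hypothesis never identifies which $y_j$ is responsible, and nothing rules out the responsible index changing from prime to prime; a divisibility valid only on a positive-density set is not enough for the one-variable theorem, which therefore cannot be invoked as a black box. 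The paper instead proves a genuinely multi-variable statement (Theorem \ref{twierdzenie}): the isolation step only yields a multiplicative dependence $x_{i_0}^{\alpha}=\prod_j y_j^{\beta_j}$, and three further rounds of primes with carefully prescribed exact $\ell$-adic valuations of orders are needed to show that only one $\beta_j$ is nonzero (Step 2), that $\beta\mid\alpha$ (Step 3), and that the residual torsion vanishes (Step 4, trivial over $\mathbb{Q}$), after which a short symmetry-and-counting argument (Corollary \ref{wniosek support}) finishes the proof. Your argument needs analogues of Steps 2--4; as written it stops where the real work begins.
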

and its analogues for \textit{Mordell-Weil type groups} (see Corollary \ref{wniosek support}). Precise axiomatic setup for these groups can be found in \cite{Bar1}, below we just list those of them we consider in this paper:
\begin{enumerate}
	\item $\mathcal{O}_{F,S}^{\times}$, $S$-units groups, where $F$ is a number field and $S$ is a finite set of ideals in the ring of integers $\mathcal{O}_{F}$,
	\item $A(F)$, Mordell-Weil groups of abelian varieties over number fields $F$ with $\mathrm{End}_{\bar{F}} (A) = \mathbb{Z}$,
	\item $K_{2n+1}(F)$, $n>0$ , odd algebraic $K$-theory groups.
\end{enumerate}	
Theorem \ref{uogolnienie erdosa} generalizes  the \textit{support problem}, i.e. the following question of P\'{a}l Erd\"{o}s
\footnote{Generalizations in other directions can be found in \cite{S02}, \cite{Bar1} and \cite{Per}} :\\
\\
\textit{Let $x, y > 1$ be natural numbers such that
\begin{equation}
 \mathrm{supp}(x^{n}-1)=\mathrm{supp}(y^{n}-1) \label{warunek erdosa}
\end{equation}
for every natural number $n$. Is then $x=y$?}\\
\\
C. Corrales-Rodrig\'{a}\~{n}ez and R. Schoof answered the question affirmatively by proving the following theorem \footnote{Note that a more general result was obtained by A. Schinzel in \cite{S01}, see Theorem 2 } (and we will prove its generalization, see Theorem \ref{twierdzenie}):\\
\\
\textit{Let $F$ be a number field and let $x, y \in F^*$. If for almost all prime ideals $p$ of the ring of integers of $F$ and for all positive integers $n$ one has
\begin{equation}
y^n \equiv 1 (\mathrm{mod}\, p)\quad \mbox{whenever}\quad x^n \equiv 1 (\mathrm{mod}\, p) \label{warunek CorSch}
\end{equation}
then $y$ is a power of $x$.}\\
\\
They also proved an analogues theorem for elliptic curves and asked if it could be extended to abelian varieties. For special abelian varieties the problem was solved independently by G. Banaszak, W. Gajda and P. Kraso\'{n} in \cite{BGK2} and Ch. Khare and D. Prasad in \cite{KP}. The final solution of that problem was given by M. Larsen in \cite{Larsen}:\\
\\
\textit{
Let $F$ be a number field, $O_F$ its ring of integers, and $\mathcal{O}$ the coordinate ring of an open subscheme of $\mathrm{Spec} \, O_F$. Let $\mathcal{A}$ be an abelian scheme over $\mathcal{O}$ and $P,Q \in \mathcal{A}(\mathcal{O})$ arbitrary sections. Suppose that for all $n \in \mathbb{Z}$ and all prime ideals $p$ of $\mathcal{O}$, we have the implication\\
 \begin{displaymath}{if} \, \, \, nP \equiv 0 \, (\mathrm{mod}\, p) \, \, \, {then} \, \, \, nQ \equiv 0 \, (\mathrm{mod}\, p).
 \end{displaymath}
Then there exist a positive integer $k$ and an endomorphism $\phi \in End_{\mathcal{O}}(\mathcal{A})$ such
that $\phi(P) = kQ$.}\\ 
\\
In \cite{BGK1} also an analogues result for $K$- theory groups of number fields was proved.\\

Note that since ${\mathcal{O}_{F}/p}^{\times} $ is a cyclic group, the condition (\ref{warunek CorSch}) is equivalent to the following:
\begin{equation}
y \in \left\langle x\right\rangle \, (\mathrm{mod}\, p)\label{detecting}
\end{equation}
where $\left\langle x\right\rangle$ denotes the subgroup generated by $x$. However in the case of abelian varieties conditions (\ref{warunek CorSch}) and (\ref{detecting}) are not equivalent. It leads to the problem of \textit{detecting linear dependence by reduction maps} (see section \ref{section detecting}), formulated by W. Gajda in 2002 in a letter to Kenneth Ribet. The following theorem answering this question was proved by T. Weston in \cite{We}:\\
\\
\textit{Let $A$ be an abelian variety over a number field $F$ and assume that $End_FA$ is commutative. Let $\Lambda$ be a subgroup of $A(F)$ and suppose that $P \in A(F)$ is such that $P \in \Lambda \, (\mathrm{mod}\, v)$ for almost all places $v$ of $F$. Then $P \in \Lambda + A(F)_{\mathrm{tors}}$.}\\
\\
A similar result and its analogue for $K$-theory groups of number fields were proved independently by G. Banaszak, W. Gajda and P. Kraso\'{n} in \cite{BGK3}. Further results in abelian varieties case were obtained by W. Gajda and K. G\'{o}rnisiewicz in \cite{GG} and recently G. Banaszak proved the following theorem in \cite{Ban}:\\
\\
\textit{
Let $P_{1}, \ldots , P_{r}$ be elements of $A(F)$ linearly independent over $R=\mathrm{End}_{F} (A)$.
Let $P$ be a point of $A(F)$ such that $RP$ is a free $R$- module. The following conditions
are equivalent:\\
(1) \, $P \in \sum_{i=1}^{r} \mathbb{Z} P_{i}$\\
(2) \, $P \in \sum_{i=1}^{r} \mathbb{Z} P_{i} \, (\mathrm{mod} \, v)$ for almost all primes $v$ of $\mathcal{O}_{F}$.
}\\
\\
In Theorem \ref{detecting linear dependence} we generalize the problem of detecting linear dependence by reduction maps for Mordell-Weil type groups listed above
  analogously as we did it with  
the support problem in Theorem \ref{twierdzenie}.
\section*{Notation.}

\begin{tabular}{ll}
$B(F)$ &  a Mordell-Weil type group over a number field $F$ \\
$\Lambda_{\mathrm{tors}}$ &  the torsion part of a subgroup $\Lambda < B(F)$ \\
$\mathrm{ord} \ T$ & the order of a torsion point $T \in B(F)$ \\
$\mathrm{ord}_{v} P$ &  the order of a point $P \ (\mathrm{mod} \ v)$ where $P \in B(F)$ and $v$ is a prime of $F$ \\
										& (a prime of good reduction in abelian variety case and a prime not in $S$\\
										& in $S$-units case)\\
$l^{k} \parallel n$	& means $l^{k} \mid  n$		and	$l^{k+1} \nmid n$ where $l$ is a prime number, $k$ a nonnegative\\ 		& integer and $n$ a natural number.						
\end{tabular}\\

In the proofs we will use several times the following result which is a refinement of Theorem 3.1 of \cite{BGK3}:

\begin{lem}[\cite{Bar1}, Theorem 5.1]\label{o posylaniu}
Let $l$ be a prime number, $(k_{1}, \ldots, k_{m})$ a sequence of nonnegative integers. If $P_{1}, \ldots, P_{m} \in B(F)$ are linearly independent points, then there is a family of primes $v$ in $F$ such that $l^{k_{i}} \parallel \mathrm{ord}_{v} P_{i}$ if $k_{i}>0$ and  $l \nmid \mathrm{ord}_{v} P_{i}$ if $k_{i}=0$.
\end{lem}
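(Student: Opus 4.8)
The plan is to translate the divisibility requirements on $\mathrm{ord}_v P_i$ into a Frobenius condition inside a suitable Kummer extension and then to produce the family of primes by the Chebotarev density theorem. Fix an integer $N$ with $N > \max_i k_i$ and restrict attention to primes $v$ of good reduction with $v \nmid l$; all but finitely many $v$ are of this kind, and for such $v$ the reduction map is injective on $l^{\infty}$-torsion. Choose division points $Q_i \in B(\bar F)$ with $l^N Q_i = P_i$ and form $L = F(B[l^N],\, Q_1,\ldots,Q_m)$. By Kummer theory, in the axiomatic form available for Mordell--Weil type groups (see \cite{Bar1}), the group $\mathrm{Gal}(L/F)$ sits in an exact sequence $1 \to V \to \mathrm{Gal}(L/F) \to \mathrm{Gal}(F(B[l^N])/F) \to 1$ with $V \hookrightarrow B[l^N]^{m}$ via $\sigma \mapsto (\sigma Q_i - Q_i)_i$, and the \emph{linear independence} of $P_1,\ldots,P_m$ forces $V$ to be large; this is precisely the independence input packaged into Theorem 3.1 of \cite{BGK3} and refined in \cite{Bar1}.

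For a prime $v$ unramified in $L$ the valuation $\mathrm{val}_l(\mathrm{ord}_v P_i)$ is a function of the conjugacy class of $\mathrm{Frob}_v$ in $\mathrm{Gal}(L/F)$, governed by two pieces of data. The image of $\mathrm{Frob}_v$ in $\mathrm{Gal}(F(B[l^N])/F)$ fixes the amount of $l$-power torsion present in $B(F_v)$: in the cyclic model $B = \mathbb{G}_m$ this is the exponent $a$ with $l^a \parallel |B(F_v)|$, determined by the action on $\mu_{l^N}$. The Kummer coordinate $\sigma Q_i - Q_i \in B[l^N]$ then records the exact $l$-divisibility $d_i$ of the reduction $\bar P_i$, and one has $\mathrm{val}_l(\mathrm{ord}_v P_i) = a - d_i$, with the evident analogue read off from $(\mathrm{Frob}_v - 1)Q_i$ in the higher-rank case. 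Thus it suffices to exhibit one $\sigma \in \mathrm{Gal}(L/F)$, or rather a conjugation-stable set of such, for which $a = \max_i k_i$ and the Kummer coordinates satisfy $d_i = a - k_i$ for every $i$; Chebotarev then supplies a positive-density family of primes $v$ with $\mathrm{Frob}_v$ in that class, each meeting the requirement $\mathrm{val}_l(\mathrm{ord}_v P_i) = k_i$.

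The heart of the matter is the \emph{simultaneous} realization of these conditions, because the two constraints are coupled. First I would choose the image of $\sigma$ in $\mathrm{Gal}(F(B[l^N])/F)$ so that the available local $l$-torsion has exponent exactly $\max_i k_i$, guaranteeing $d_i = a - k_i \ge 0$ for each prescribed $k_i$; then, using the largeness of $V$ coming from independence, I would pick the Kummer part of $\sigma$ to install the targets $d_i$ independently across $i$. The delicate point is that independence of the Kummer coordinates must survive once the torsion of the $P_i$ and of $\Lambda$ is accounted for, and that the Galois-module structure of $V$ (the action of $\mathrm{Gal}(F(B[l^N])/F)$ on it) must still leave room to set each coordinate to its prescribed value; this is exactly the content that the refinement \cite{Bar1} is engineered to secure, generalizing \cite{BGK3}. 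Once such a $\sigma$ is in hand, the Chebotarev density theorem finishes the argument and yields the desired infinite family of primes $v$.
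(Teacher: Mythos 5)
The paper does not actually prove this lemma: it is imported verbatim as Theorem 5.1 of \cite{Bar1} (itself a refinement of Theorem 3.1 of \cite{BGK3}), so there is no internal proof to compare against. Your overall strategy --- pass to the Kummer extension $L=F(B[l^{N}],Q_{1},\ldots,Q_{m})$, observe that $\mathrm{val}_{l}(\mathrm{ord}_{v}P_{i})$ is determined by the conjugacy class of $\mathrm{Frob}_{v}$ in $\mathrm{Gal}(L/F)$, and finish with Chebotarev --- is indeed the route taken in those references, so the outline is the right one.

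As a proof, however, the proposal has a genuine gap at its center, and you essentially say so yourself: the ``simultaneous realization'' of the $m$ prescribed valuations \emph{is} the content of the lemma, and you discharge it with the remark that this is exactly what the refinement in \cite{Bar1} is engineered to secure. What actually has to be proved is (i) that the Kummer group $V\subset B[l^{N}]^{m}$ contains $(l^{c}B[l^{N}])^{m}$ for a constant $c$ independent of $N$ --- a Bashmakov--Ribet-type statement which for abelian varieties uses the hypothesis $\mathrm{End}_{\bar F}(A)=\mathbb{Z}$ and for $K$-theory the corresponding Galois-cohomological input --- and (ii) that one can choose $\sigma$ whose image in $\mathrm{Gal}(F(B[l^{N}])/F)$ makes the fixed module $B[l^{N}]^{\sigma=1}$ of exponent exactly $l^{\max_{i}k_{i}}$ while the coset of $V$ over that image still realizes every prescribed tuple of Kummer coordinates; neither step is carried out. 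Two further points need repair. The formula $\mathrm{val}_{l}(\mathrm{ord}_{v}P_{i})=a-d_{i}$ is literally correct only when the reduction group is cyclic (the $S$-unit case); for abelian varieties $B_{v}(k_{v})[l^{\infty}]$ need not be cyclic, and the passage from $(\mathrm{Frob}_{v}-1)Q_{i}$ and the structure of $B[l^{N}]^{\mathrm{Frob}_{v}=1}$ to the exact $l$-part of $\mathrm{ord}_{v}P_{i}$ requires a separate computation rather than an ``evident analogue.'' Finally, the normalization $N>\max_{i}k_{i}$ is too small: $N$ must also absorb the constant $c$ above, since only coordinates lying in $l^{c}B[l^{N}]$ are guaranteed to be attainable in the Kummer part.
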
  
%------------------------------------------------------------------------
%---------------------------S-U-P-P-O-R-T--------------------------------
%------------------------------------------------------------------------

\section{The support problem.}

Fix one of the Mordell-Weil type group listed in the Introduction. In the following theorem all points belong to the fixed group and "linearly independent" (resp. "nontorsion") means linearly independent (resp. nontorsion) over $\mathbb{Z}$.

\begin{thm}\label{twierdzenie}
Let $Q_{1}, \ldots , Q_{t}$ be linearly independent points and $P$ be a nontorsion point.
Suppose that for almost all primes $v$ of $\mathcal{O}_{F}$ and all natural numbers $n$ the following condition holds :
\begin{eqnarray} if\  nP=0 \ (\mathrm{mod} \ v) \ then \ nQ_{i_{v}}=0 \ (\mathrm{mod} \ v)\ for \  some \ i_{v} \in \left\{1, \ldots , t\right\} .\label{warunek}
\end{eqnarray}
Then $Q_{i}=d P$ for some $i\in \left\{1, \ldots , t\right\}$ and some integer $d$. 
\end{thm}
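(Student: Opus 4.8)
The plan is to reformulate condition (\ref{warunek}) in terms of orders and then, arguing by contradiction, to produce from Lemma \ref{o posylaniu} a prime at which every $Q_i$ fails to be ``detected'' by $P$. Since $nP \equiv 0 \pmod v$ holds exactly when $\mathrm{ord}_v P \mid n$, the binding instance of (\ref{warunek}) is $n=\mathrm{ord}_v P$; thus (\ref{warunek}) asserts that for almost all $v$ there is an index $i$ with $\mathrm{ord}_v Q_i \mid \mathrm{ord}_v P$. To refute the conclusion it therefore suffices to exhibit a prime $v$ lying in one of the infinite families supplied by Lemma \ref{o posylaniu} (hence among the ``almost all'') at which $\mathrm{ord}_v Q_i \nmid \mathrm{ord}_v P$ simultaneously for every $i \in \{1,\ldots,t\}$, since then the case $n=\mathrm{ord}_v P$ of (\ref{warunek}) fails no matter how the index is chosen.

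First I would show that $P$ must be linearly dependent on $Q_1,\ldots,Q_t$. If not, then $P,Q_1,\ldots,Q_t$ are linearly independent, and applying Lemma \ref{o posylaniu} to these $t+1$ points with a prime $l$ and exponents $k_P=0$, $k_{Q_i}=1$ yields primes $v$ with $l \nmid \mathrm{ord}_v P$ and $l \parallel \mathrm{ord}_v Q_i$ for all $i$; then $l \mid \mathrm{ord}_v Q_i$ while $l \nmid \mathrm{ord}_v P$, so $\mathrm{ord}_v Q_i \nmid \mathrm{ord}_v P$ for every $i$, the desired contradiction. Hence, modulo the torsion subgroup (whose finite order $w$ I keep coprime to the primes $l$ chosen below), there is a primitive relation $aP = \sum_{i\in I} b_i Q_i$ with $a\neq 0$, all $b_i\neq 0$, $I\neq\emptyset$, and $\gcd(a,\{b_i\})=1$. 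For a fixed $i_0\in I$ the set $S=\{P\}\cup\{Q_i : i\neq i_0\}$ is again linearly independent of cardinality $t$, being obtained from $\{Q_1,\ldots,Q_t\}$ by replacing $Q_{i_0}$ (whose coefficient in the relation is nonzero) by $P$.

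I would next rule out $|I|\geq 2$. Choose a prime $l$ dividing neither $a$, nor any $b_i$, nor $w$, and apply Lemma \ref{o posylaniu} to $S$ with $k_P=0$ and $k_{Q_j}=1$ for $j\neq i_0$. At the resulting primes $v$ one has $l\nmid \mathrm{ord}_v P$ and $l\parallel \mathrm{ord}_v Q_j$ for $j\neq i_0$, so all $Q_j$ with $j\neq i_0$ already fail. Reducing the relation modulo $v$ and passing to $l$-parts, the triviality of the $l$-part of $\overline P$ forces $b_{i_0}\overline{Q_{i_0}} \equiv -\sum_{i\in I\setminus\{i_0\}} b_i \overline{Q_i} \pmod{l}$; since $l\nmid b_i$ and the reductions $\overline{Q_i}$, $i\neq i_0$, can be taken to lie in independent cyclic $l$-primary factors (the refinement underlying Lemma \ref{o posylaniu}), the right-hand side is nonzero, whence $l\mid \mathrm{ord}_v Q_{i_0}$ and $Q_{i_0}$ fails as well, a contradiction. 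Therefore $|I|=1$.

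It remains to treat $aP=bQ_{i_0}$ and to show $b=\pm 1$. If $|b|\geq 2$, I would pick a prime $l\mid b$ with $l\nmid aw$ and apply Lemma \ref{o posylaniu} to $S$ with $k_P=1$ and $k_{Q_j}=2$ for $j\neq i_0$; then $v_l(\mathrm{ord}_v P)=1$ is strictly smaller than $v_l(\mathrm{ord}_v Q_j)=2$, while the relation gives $v_l(\mathrm{ord}_v Q_{i_0})=1+v_l(b)\geq 2>1$, so once more $\mathrm{ord}_v Q_i \nmid \mathrm{ord}_v P$ for every $i$, a contradiction. Hence $b=\pm 1$ and $Q_{i_0}=\pm a\,P=dP$, as required. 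The main obstacle is the step ruling out $|I|\geq 2$: one must control the order of the single point $Q_{i_0}$ that cannot be fed directly into Lemma \ref{o posylaniu}, and this is exactly where the independence of the reductions in the $l$-primary part and the coprimality choice of $l$ (which also absorbs the torsion through $w$) are essential.
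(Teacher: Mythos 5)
Your overall architecture is sound and close to the paper's: the reformulation of (\ref{warunek}) as ``$\mathrm{ord}_v Q_i \mid \mathrm{ord}_v P$ for some $i$,'' your first step (linear dependence of $P,Q_1,\ldots,Q_t$) is exactly the paper's Step 1, and your final step showing $b=\pm1$ is a correct variant of the paper's Step 3 (the paper instead shows $\beta\mid\alpha$ by sending $l^{k}\parallel\mathrm{ord}_v Q_j$ for all $j$). But the step ruling out $|I|\geq 2$ has a genuine gap. You need $l\mid\mathrm{ord}_v Q_{i_0}$, and you derive it from the claim that $\sum_{i\in I\setminus\{i_0\}} b_i\overline{Q_i}$ has nontrivial $l$-part because the reductions ``lie in independent cyclic $l$-primary factors.'' Lemma \ref{o posylaniu} does not provide this: it controls only the individual orders $\mathrm{ord}_v Q_j$, and with your choice $k_{Q_j}=1$ for all $j\neq i_0$ nothing prevents the order-$l$ components of the $b_i\overline{Q_i}$ from cancelling in the sum (the issue is invisible when $|I|=2$ but real for $|I|\geq 3$). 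The paper's Step 2 is engineered precisely to avoid needing any such independence: it feeds the combination $\beta_1Q_1+\cdots+\beta_{k+1}Q_{k+1}=\alpha P$ itself into Lemma \ref{o posylaniu} as one of the linearly independent points (prescribing $l\nmid$ its order), and assigns exponent $2$ to one summand and $1$ to the rest, so that the partial sum $n(\beta_1Q_1+\cdots+\beta_kQ_k)$ is provably nonzero of order divisible by $l^2$; the order of $\beta_{k+1}Q_{k+1}$ is then pinned down by subtraction. Your argument could be repaired by the same device (or by choosing pairwise distinct positive exponents $k_{Q_j}$, so the $l$-part of the sum has order $l^{\max}$), but as written it rests on an unproved strengthening of the lemma.

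A second, smaller gap: your linear relation holds only modulo torsion, so the conclusion of your last step is $Q_{i_0}=\pm aP+T$ with $T\in B(F)_{\mathrm{tors}}$, not $Q_{i_0}=dP$ as the theorem asserts. Killing $T$ requires a further application of the hypothesis: the paper's Step 4 chooses $l\mid\mathrm{ord}\,T$, produces $v$ with $l\nmid\mathrm{ord}_v P$ and $l\mid\mathrm{ord}_v Q_j$ for $j\neq i$, deduces $l\nmid\mathrm{ord}_v Q_i$ from (\ref{warunek}), hence $l\nmid\mathrm{ord}_v T$, and contradicts $\mathrm{ord}_v T=\mathrm{ord}\,T$ (Lemma 3.11 of \cite{BGK3}). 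You would need to add this step.
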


\begin{proof}

\textbf{Step. 1.}
Suppose that points $P, Q_{1}, \ldots , Q_{t}$ are linearly independent. Fix arbitrary prime number $l$. By Lemma \ref{o posylaniu} there are infinitely many primes $v$ such that $l \nmid \mathrm{ord}_{v} P$ and $l \mid \mathrm{ord}_{v} Q_{i}$ for all $i$. Set $n=\mathrm{ord}_{v} P$. Then  
$nP=0 \ (\mathrm{mod} \ v)$ but $nQ_{i} \neq 0 \ (\mathrm{mod} \ v)$ for  all  $i \in \left\{1, \ldots , t\right\}$. Hence we get a contradiction by (\ref{warunek}) so $P, Q_{1}, \ldots , Q_{t}$ are linearly dependent.\\
\textbf{Step. 2.}
Now suppose that  there exist a nonzero integer $\alpha$ and nonzero integers $\beta_{1},\ldots,\beta_{k},\beta_{k+1}$ for some $k \geq 1$ such that $\alpha P = \beta_{1} Q_{1}+\ldots + \beta_{k} Q_{k} + \beta_{k+1}Q_{k+1}$ . Fix arbitrary  prime number $l$ coprime to $\alpha, \beta_{1},\ldots,\beta_{k},\beta_{k+1}$. Since $\beta_{1} Q_{1},\ldots , \beta_{k} Q_{k} , \beta_{1} Q_{1}+\ldots + \beta_{k} Q_{k} + \beta_{k+1}Q_{k+1}, Q_{k+2}, \ldots, Q_{t}$ are linearly independent then by Lemma \ref{o posylaniu} there are infinitely many primes $v$ such that 
\begin{center}
$l \parallel \mathrm{ord}_{v} \beta_{i} Q_{i}$ for $i \in \left\{1, \ldots , k-1\right\}\cup\left\{k+2, \ldots, t\right\}$,\\ 
$l^{2} \parallel \mathrm{ord}_{v} \beta_{k} Q_{k}$,\\
 $l \nmid \mathrm{ord}_{v} \beta_{1} Q_{1}+\ldots + \beta_{k} Q_{k} + \beta_{k+1}Q_{k+1}$.
\end{center} 
Let
\begin{center}
$n= \mathrm{lcm} (\frac{1}{l} \mathrm{ord}_{v} \beta_{1} Q_{1}, \ldots, \frac{1}{l} \mathrm{ord}_{v} \beta_{k-1} Q_{k-1}, \frac{1}{l^{2}} \mathrm{ord}_{v} \beta_{k} Q_{k}, \mathrm{ord}_{v} \beta_{1} Q_{1}+\ldots + \beta_{k} Q_{k} + \beta_{k+1}Q_{k+1},\frac{1}{l} \mathrm{ord}_{v} Q_{k+2}, \ldots, \frac{1}{l} \mathrm{ord}_{v} Q_{t})$.
\end{center}
Then $n(\beta_{1} Q_{1}+\ldots + \beta_{k} Q_{k} + \beta_{k+1}Q_{k+1})=0 \ (\mathrm{mod} \ v)$ but $n(\beta_{1} Q_{1}+\ldots + \beta_{k} Q_{k} )\neq 0 \ (\mathrm{mod} \ v)$ hence $n \beta_{k+1}Q_{k+1} \neq 0 \ (\mathrm{mod} \ v)$ but $l^{2}n \beta_{k+1}Q_{k+1} = 0 \ (\mathrm{mod} \ v)$ so $l \mid \mathrm{ord}_{v} \beta_{k+1} Q_{k+1}$. Thus $\alpha n P = 0 \ (\mathrm{mod} \ v)$ and by (\ref{warunek}) $\alpha n Q_{i}=0 \ (\mathrm{mod} \ v)$ for some $i$ so $l \mid \alpha$ . We get a contradiction so $k=0$.\\
\textbf{Step. 3.} 
By the previous step $\alpha P = \beta Q_{i}$ for some $i$ and nonzero integers $\alpha , \beta$.
Now let $l^{k} \parallel \beta$ for some prime number $l$ and positive integer $k$. By Lemma \ref{o posylaniu} there are infinitely many primes $v$ such that $l^{k} \parallel \mathrm{ord}_{v}  Q_{j}$ for $j \in \left\{1, \ldots , t\right\}$. So $\mathrm{ord}_{v}  Q_{i} = l^{k} m$ for some integer $m$ coprime to $l$ and $m \alpha  P= m \beta Q_{i} = 0 \ (\mathrm{mod} \ v)$ . Hence by (\ref{warunek}) $m \alpha  Q_{j}=0 \ (\mathrm{mod} \ v)$ for some $j$. Thus $l^{k} \mid \alpha$. Since $l$ was arbitrary we get $\beta \mid \alpha$, i.e. $d \beta P= \beta Q_{i}$ for some nonzero integer $d$.\\ 
\textbf{Step. 4.}
Now we repeat an argument from the proof of Theorem 3.12 of \cite{BGK3}:\\
Since $ \beta (d P -  Q_{i})=0$ then $d P - Q_{i}=T$ is a torsion point. Suppose that $T\neq 0$ and $l \mid \mathrm{ord}\ T$ for some prime number $l$. Again by Lemma \ref{o posylaniu} there are infinitely many primes $v$ such that $l \nmid \mathrm{ord}_{v} P$, $l \mid \mathrm{ord}_{v}Q_{j}$ for $j \neq i$. Thus by (\ref{warunek}) we get $l \nmid \mathrm{ord}_{v}Q_{i}$. Hence by definition of $T$ we have $l \nmid \mathrm{ord}_{v} T $. But for almost all primes $v$ $\mathrm{ord}\ T = \mathrm{ord}_{v} T$ by Lemma 3.11 of \cite{BGK3}. By a contradiction we get $T=0$.
\end{proof}

\begin{cor}\label{wniosek support} Let $P_{1}, \ldots , P_{t}$ (resp. $Q_{1}, \ldots , Q_{t}$) be linearly independent points such that for almost all primes $v$ of $\mathcal{O}_{F}$ and all natural numbers $n$
\begin{eqnarray} nP_{i}=0 \ (\mathrm{mod} \ v)\ for \  some \ i  \ \Leftrightarrow \ nQ_{j}=0 \ (\mathrm{mod} \ v)\ for \  some \ j  .\label{generalization}
\end{eqnarray}
Then there exist $\delta_{1}, \ldots, \delta_{t} \in \left\{-1, 1\right\}$ such that $\left\{P_{1}, \ldots , P_{t}\right\}=\left\{\delta_{1}Q_{1}, \ldots , \delta_{t}Q_{t}\right\}$.
\end{cor}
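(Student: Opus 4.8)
The plan is to reduce the corollary to Theorem \ref{twierdzenie}, applied twice (once in each direction), and then to combine the two resulting relations using the linear independence of the two families.

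First I would fix an index $i$ and verify that the point $P=P_{i}$, together with the linearly independent family $Q_{1},\dots,Q_{t}$, satisfies hypothesis (\ref{warunek}) of Theorem \ref{twierdzenie}. Indeed, $P_{i}$ is nontorsion, being one of a linearly independent family, and for almost all $v$ and all $n$ the equality $nP_{i}=0\ (\mathrm{mod}\ v)$ trivially yields ``$nP_{k}=0\ (\mathrm{mod}\ v)$ for some $k$'', whence the forward implication of (\ref{generalization}) gives $nQ_{j}=0\ (\mathrm{mod}\ v)$ for some $j$. Thus Theorem \ref{twierdzenie} applies and produces an index $\sigma(i)$ and a nonzero integer $d_{i}$ with $Q_{\sigma(i)}=d_{i}P_{i}$ (nonzero because $Q_{\sigma(i)}$ is nontorsion). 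Running the same argument with the roles of the two families exchanged, using the reverse implication of (\ref{generalization}), gives for each $j$ an index $\tau(j)$ and a nonzero integer $e_{j}$ with $P_{\tau(j)}=e_{j}Q_{j}$.

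Next I would compose these relations: substituting gives $P_{\tau(\sigma(i))}=e_{\sigma(i)}Q_{\sigma(i)}=e_{\sigma(i)}d_{i}P_{i}$. Since the $P$'s are linearly independent and $e_{\sigma(i)}d_{i}\neq 0$, this forces $\tau(\sigma(i))=i$, and then $(e_{\sigma(i)}d_{i}-1)P_{i}=0$ together with $P_{i}$ nontorsion yields $e_{\sigma(i)}d_{i}=1$. Consequently $\sigma$ is injective, hence a permutation of $\{1,\dots,t\}$, and each integer $d_{i}$ satisfies $d_{i}e_{\sigma(i)}=1$, so $d_{i}=e_{\sigma(i)}\in\{-1,1\}$. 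Writing $\delta_{j}=d_{\sigma^{-1}(j)}$ and using $d_{i}^{-1}=d_{i}$ for $d_{i}=\pm 1$, the relation $Q_{\sigma(i)}=d_{i}P_{i}$ rearranges to $P_{i}=d_{i}Q_{\sigma(i)}$, i.e. $P_{\sigma^{-1}(j)}=\delta_{j}Q_{j}$; as $j$ ranges over $\{1,\dots,t\}$ this exhibits the desired set equality $\{P_{1},\dots,P_{t}\}=\{\delta_{1}Q_{1},\dots,\delta_{t}Q_{t}\}$.

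The only genuinely delicate point is the final step: one must use both applications of Theorem \ref{twierdzenie} simultaneously in order to rule out that $d_{i}$ is a proper multiple and that $\sigma$ collapses distinct indices. Linear independence of each family is exactly what upgrades the a priori weak conclusion ``$Q_{\sigma(i)}$ is an integer multiple of $P_{i}$'' to the sharp statement that the multiplier is a unit and that $\sigma$ is a bijection; the remainder is bookkeeping.
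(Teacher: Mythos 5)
Your proof is correct and follows essentially the same route as the paper: both apply Theorem \ref{twierdzenie} $2t$ times (once with each $P_{i}$ against the $Q$'s and once with each $Q_{j}$ against the $P$'s), then compose the two resulting relations and use linear independence to force the composite multiplier to equal $1$, hence each multiplier to be $\pm 1$ and the matching to be a bijection. Your write-up merely makes explicit the bookkeeping with $\sigma$, $\tau$ and the nonvanishing of the multipliers that the paper leaves implicit.
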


\begin{proof}
Applying Theorem \ref{twierdzenie} $2t$-times and using assumption of linear independence we get that for every point $P_{i}$ there is a unique point $Q_{j}$ such that $Q_{i}=k P_{j}$ and $P_{j}=l Q_{i}$ for some nonzero integers $k,l$. Hence $P_{j}=lk  P_{j}$ so $lk=1$. Thus $l,k \in \left\{-1, 1\right\}$.
\end{proof}

\begin{proof}[Proof of Theorem \ref{uogolnienie erdosa}] The statement comes immediately from Corollary \ref{wniosek support}, since if $x=y^{-1}$ and $x,y \in \mathbb{N}$ then $x=y=1$.
\end{proof}

%------------------------------------------------------------------------
%------------------------D-E-T-E-C-T-I-N-G-------------------------------
%------------------------------------------------------------------------

\section{Detecting linear dependence by reduction maps.}\label{section detecting}

Let $B(F)$ denote one of the Mordell-Weil type group listed in the Introduction. 

\begin{thm}\label{detecting linear dependence}  Let $\Lambda$ be a subgroup of $B(F)$ and suppose that $P_{1}, \ldots , P_{n} \in B(F)$ are linearly independent points such that  for almost all primes $v$  
\begin{eqnarray} P_{i_{v}} \in \Lambda \, (\mathrm{mod}\, v) \, for \, some \,i_{v} \in \left\{1, \ldots, n\right\}  .\label{condition detecting}
\end{eqnarray}
Then $\alpha P_{i} \in \Lambda$ for some $i \in \left\{1, \ldots, n\right\}$ and $\alpha \in \mathbb{Z} \setminus \left\{0\right\}$.
\end{thm}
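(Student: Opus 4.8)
The plan is to argue by contradiction, reducing the statement to a purely local construction fed into Lemma \ref{o posylaniu}. First I would reformulate the conclusion: since $B(F)$ is finitely generated (by Mordell--Weil, Dirichlet, or Quillen--Borel according to the case), so is $\Lambda$, and the assertion ``$\alpha P_i \in \Lambda$ for some $i$ and some nonzero $\alpha$'' is equivalent to ``$P_i$ lies in the $\mathbb{Q}$-span $\Lambda \otimes \mathbb{Q}$ inside $B(F) \otimes \mathbb{Q}$ for some $i$''. So I would assume for contradiction that no $P_i$ lies in $\Lambda \otimes \mathbb{Q}$, and aim to produce infinitely many primes $v$ at which simultaneously $P_i \pmod v \notin \Lambda \pmod v$ for every $i$; this contradicts (\ref{condition detecting}), which forces all but finitely many $v$ to put some $P_{i_v}$ into $\Lambda \pmod v$.

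The mechanism for excluding a reduction from $\Lambda \pmod v$ is an order obstruction. Fix a prime $l$ coprime to $|\Lambda_{\mathrm{tors}}|$. Choose $R_{1}, \dots , R_{s} \in \Lambda$ whose classes form a basis of the free part of $\Lambda$, and extend them by a subcollection $P_{j_{1}}, \dots , P_{j_{r}}$ of the $P_{i}$ to a basis of the $\mathbb{Q}$-span of $\{R_{\bullet}, P_{\bullet}\}$; here $r \ge 1$ because every $P_{i}$ avoids $\Lambda \otimes \mathbb{Q}$. If I can arrange a prime $v$ with $l \nmid \mathrm{ord}_{v} R_{j}$ for all $j$, then the $l$-Sylow part of $\Lambda \pmod v$ is trivial (its generators $R_{j} \pmod v$, together with the reductions of the torsion of $\Lambda$, whose order is coprime to $l$, all have order prime to $l$), so any point whose reduction has order divisible by $l$ cannot lie in $\Lambda \pmod v$. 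Thus it suffices to also force $l \mid \mathrm{ord}_{v} P_{i}$ for every $i$.

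For the basis points $P_{j_{k}}$ this is immediate from Lemma \ref{o posylaniu} applied to the linearly independent system $R_{1}, \dots , R_{s}, P_{j_{1}}, \dots , P_{j_{r}}$: I prescribe $l \nmid \mathrm{ord}_{v} R_{j}$ and \emph{distinct} valuations $l^{e_{1}} \parallel \mathrm{ord}_{v} P_{j_{1}}, \dots , l^{e_{r}} \parallel \mathrm{ord}_{v} P_{j_{r}}$ with $e_{1} > \dots > e_{r} \ge 1$, obtaining an infinite family of such $v$. For a non-basis $P_{i}$ I use its integral relation $c_{i} P_{i} = \sum_{j} a_{ij} R_{j} + \sum_{k} b_{ik} P_{j_{k}} + (\text{torsion})$, with $c_{i} \ne 0$ and, crucially, not all $b_{ik}$ zero (this is exactly where $P_{i} \notin \Lambda \otimes \mathbb{Q}$ enters). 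Enlarging $l$ to be coprime to all the finitely many integers $c_{i}, a_{ij}, b_{ik}$, I project the reduced relation onto the $l$-Sylow subgroup of $B(F) \pmod v$: the $R_{j}$ and torsion terms die, leaving $c_{i} (P_{i})_{l} = \sum_{k} b_{ik} (P_{j_{k}})_{l}$. Since the orders $l^{e_{k}}$ of the $(P_{j_{k}})_{l}$ are pairwise distinct and the surviving coefficients are prime to $l$, the right-hand side has order $l^{\max\{e_{k} \, : \, b_{ik} \ne 0\}} \ge l$ (in an abelian $l$-group a sum of elements of distinct orders has the largest of those orders); as $l \nmid c_{i}$ this gives $l \mid \mathrm{ord}_{v} P_{i}$.

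Assembling these, the constructed infinite family of primes $v$ satisfies $P_{i} \pmod v \notin \Lambda \pmod v$ for all $i$, which is the contradiction sought. The main obstacle is precisely the handling of the $P_{i}$ that are $\mathbb{Q}$-dependent on $\Lambda$ together with the remaining $P_{j}$: one cannot simply feed all of $R_{\bullet}, P_{\bullet}$ into Lemma \ref{o posylaniu} since they need not be independent, so the distinct-valuation device (exactly as in Step 2 of the proof of Theorem \ref{twierdzenie}) is needed to guarantee that every combination $\sum_{k} b_{ik} P_{j_{k}}$ retains a factor of $l$ in its reduced order. A minor technical point to record is that reduction is injective on $\Lambda_{\mathrm{tors}}$ for almost all $v$ (so torsion contributes nothing to the $l$-part), which holds by the same input as Lemma 3.11 of \cite{BGK3} invoked in Step 4.
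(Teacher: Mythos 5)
Your proof is correct and follows essentially the same route as the paper's: argue by contradiction, pass to a maximal linearly independent subset of the $P_i$'s together with a basis of the free part of $\Lambda$, and apply Lemma \ref{o posylaniu} with pairwise distinct positive $l$-valuations on the independent $P$'s and $l$-free (resp.\ exactly-$l$) orders on the generators of $\Lambda$, so that the integral relations force every $P_i$ to reduce to a point of order divisible by $l$ while $\Lambda \bmod v$ has trivial (resp.\ small) $l$-part, contradicting (\ref{condition detecting}). The differences are only cosmetic: the paper normalizes $l \parallel \mathrm{ord}_v L_j$ rather than $l \nmid \mathrm{ord}_v R_j$ and computes orders via explicit lcm's instead of projections onto the $l$-Sylow subgroup.
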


\begin{proof}
\textbf{Step. 1.}
Let $L_{1}, \ldots, L_{s}$ be a basis for the nontorsion part of $\Lambda$. Suppose that $P_{1}, \ldots, P_{n}, L_{1}, \ldots, L_{s}$ are linearly independent. Choose prime number $l$ such that $l$ does not divide the exponent of the group $\Lambda_{\mathrm{tors}}$. By Lemma \ref{o posylaniu} there are infinitely many primes $v$ such that 
\begin{center}
$l \mid  \mathrm{ord}_{v} P_{i}$ for $i \in \left\{1, \ldots , n\right\}$,\\ 
$l \nmid  \mathrm{ord}_{v} L_{j}$ for $j \in \left\{1, \ldots , s\right\}$.\\ 
\end{center}  
and by Lemma 3.11 of \cite{BGK3} for almost all primes $v$ we have $\mathrm{ord}_{v} T = \mathrm{ord} \ T$ for $T \in \Lambda_{\mathrm{tors}}$. Hence we get a contradiction with (\ref{condition detecting}) and $\alpha_{1}P_{1}+\ldots+\alpha_{n}P_{n}=\lambda_{1}L_{1}+\ldots+\lambda_{s}L_{s}$ with $\alpha_{1}, \ldots, \alpha_{n}, \lambda_{1}, \ldots , \lambda_{s} \in \mathbb{Z}$ where not all $\alpha_{i}$ and not all $\lambda_{j}$ are equal to $0$.\\
\textbf{Step. 2.} Suppose that the assertion of the theorem does not hold. Let us for simplicity of notation reorder the points $P_{1}, \ldots, P_{n}$ so that $\left\{L_{1}, \ldots, L_{s}, P_{1}, \ldots, P_{k}\right\}$ is a maximal subset of the set $\left\{L_{1}, \ldots, L_{s}, P_{1}, \ldots, P_{n}\right\}$ consisting of linearly independent points. Then for every $P_{w}$ where $w\in \left\{k+1, \ldots, n\right\}$ there are $\pi_{w}, \alpha_{1,w},\ldots,\alpha_{k,w}, \lambda_{1,w},\ldots,\lambda_{s,w} \in \mathbb{Z}$  such that
\begin{eqnarray}
\pi_{w}P_{w}= \alpha_{1,w}P_{1}+\ldots+\alpha_{k,w}P_{k}+\lambda_{1,w}L_{1}+\ldots+\lambda_{s,w}L_{s} \label{linear dependence step 2}
\end{eqnarray}
with $\pi_{w}\neq 0$ and $\alpha_{i,w}\neq 0$, $\lambda_{j,w}\neq 0$ for some $i,j$. Choose prime number $l$ coprime to all nonzero $\pi_{w}, \alpha_{1,w},\ldots,\alpha_{k,w}, \lambda_{1,w},\ldots,\lambda_{s,w}$ for all $w$ and to the exponent of the group $B(F)_{\mathrm{tors}}$. By Lemma \ref{o posylaniu} there are infinitely many primes $v$ such that 
\begin{center}
$l^{i+1} \parallel  \mathrm{ord}_{v} P_{i}$ for $i \in \left\{1, \ldots , k\right\}$,\\  
$l \parallel \mathrm{ord}_{v} L_{j}$ for $j \in \left\{1, \ldots , s\right\}$.\\ 
\end{center}
Let $i_{w}$ be the greatest number such that $\alpha_{i_{w},w}\neq 0$. 
Put  \begin{center}
$n= \mathrm{lcm} (\frac{1}{l^{2}} \mathrm{ord}_{v} P_{1}, \ldots, \frac{1}{l^{k}} \mathrm{ord}_{v} P_{k-1}, \frac{1}{l^{k+1}} \mathrm{ord}_{v} P_{k}, \frac{1}{l}\mathrm{ord}_{v} L_{1}, \ldots, \frac{1}{l}\mathrm{ord}_{v} L_{s})$.
\end{center}
Now we get by (\ref{linear dependence step 2}) that $n l^{i_{w}}\alpha_{i_{w}}P_{i_{w}}=n l^{i_{w}}\pi_{w}P_{w}$. Since $n l^{i_{w}}\alpha_{i_{w}}P_{i_{w}} \neq 0$, we get $n l^{i_{w}}\pi_{w}P_{w}\neq 0$. Since 
$n l^{i_{w}+1}\alpha_{i_{w}}P_{i_{w}} = 0$, we have $n l^{i_{w}+1}\pi_{w}P_{w} = 0$. Hence $l^{i_{w}+1} \parallel  \mathrm{ord}_{v} P_{w}$.
By (\ref{condition detecting}) we get a contradiction with orders of points $P_{1},\ldots,P_{k},P_{k+1},L_{1},\ldots,L_{s}$. So $k=0$. 
\end{proof}

\section*{Acknowledgments} 
The author would like to thank Grzegorz Banaszak and Wojciech Gajda for discussions, Krzysztof G\'{o}rnisiewicz for comments and Antonella Perucca for pointing out  mistakes in the first version of the paper.

%-----------------BIBLIOGRAPHY----------

%\bibliographystyle{plain}

\end{document}